\newcommand{\define}[1]{\textbf{#1}} %definitions in italics
\declaretheorem[name={Theorem},numberwithin=section]{thm}
\declaretheorem[name={Proposition}, sibling=thm]{proposition}
\declaretheorem[name={Lemma}, sibling=thm]{lemma}
\declaretheorem[name={Claim}, numberwithin=thm]{claim}
\declaretheorem[name={Definition}, style=definition, sibling=thm]{definition}
\declaretheorem[name={Corollary}, sibling=thm]{corollary}
\newcommand{\R}{\mathbb{R}}
\newcommand{\F}{\mathbb{F}}
\newcommand{\cantor}{2^\omega}
\newcommand{\concat}{{}^\smallfrown}
\newcommand{\from}{\colon}
\DeclareMathOperator{\diam}{diam}
\DeclareMathOperator{\id}{id}
\newcommand{\inters}{\cap}
\newcommand{\biginters}{\bigcap}
\renewcommand{\subset}{\subseteq}
\renewcommand{\supset}{\supseteq}
\newcommand{\nsupset}{\not \supseteq}
\newcommand{\disjointunion}{\sqcup}
\newcommand{\nbhd}[1]{[#1]}
\begin{document}

\title{Hausdorff dimension and countable Borel equivalence relations}
\author[Marks]{Andrew Marks}
\author[Rossegger]{Dino Rossegger}
\author[Slaman]{Theodore Slaman}

\address[Rossegger]{Institute of Discrete Mathematics and Geometry, Technische Universit\"at Wien}
\email{dino.rossegger@tuwien.ac.at}

\address[Marks, Slaman]{Department of Mathematics, University of California,
Berkeley}
\email{marks@math.berkeley.edu}
\email{slaman@math.berkeley.edu}

\thanks{This paper was inspired by discussions of the authors with Uri
Andrews and Luca San Mauro at the American Institute of Mathematics
workshop ``Invariant descriptive computability theory''. The authors wish
to thank both Andrews and San Mauro for their input and the American
Institute of Mathematics for providing the venue for these discussions. The
work of the first author was partially supported by NSF grant DMS-2348788.
The
work of the second author was supported by the European Union’s Horizon 2020
Research and Innovation Programme under the Marie Skłodowska-Curie grant
agreement No. 101026834 - ACOSE and the Austrian Science Fund (FWF) 10.55776/P36781.} 
\subjclass{03E15,28A78}
\begin{abstract}
  We show that if $E$ is a countable Borel equivalence relation on $\R^n$,
  then there is a closed subset $A \subset [0,1]^n$ of Hausdorff dimension
  $n$ so that $E \restriction A$ is smooth. More generally, if $\leq_Q$ is
  a locally countable Borel quasi-order on $\cantor$ and $g$ is any gauge
  function of lower order than the identity, then there is a closed set $A$
  so that $A$ is an antichain in $\leq_Q$ and $H^g(A) > 0$. 
\end{abstract}
\maketitle

\section{Introduction}

%A central goal in mathematics is to completely classify types of
%mathematical objects such as groups up to
%isomorphism, or measure-preserving transformations up to conjugacy. 
Descriptive set theory has
provided a general setting for comparing the relative difficulty of 
classification problems in mathematics, formalized as the study of Borel reducibility
among equivalence relations. 
If $E$ and $F$ are equivalence relations on standard Borel spaces $X$
and $Y$, say that $E$ is \define{Borel reducible} to $F$ 
if there is a Borel function $f \from X \to Y$ such that for
all $x_0, x_1 \in X$, we have $x_0 \mathrel{E} x_1 \iff f(x_0) \mathrel{F} f(x_1)$.
Especially interesting in the theory are
non-classifiability results. If 
$E \nleq_B F$, then there does not exist any concretely definable (i.e.
Borel) way to use elements of $F$ as invariants to classify $E$. For
example, Hjorth's theory of turbulence has given a general tool for proving
such non-classifiability results, showing that many natural equivalence
relations in mathematics cannot be classified by the isomorphism relation
of any type of countable structure \cite{H}.

%The study of Borel reducibility of equivalence relations has yielded both
%an understanding of the abstract structure of all Borel equivalence
%relations under Borel reducibility as well as particular
%non-classification results that are of
%interest to working mathematicians. 

A well-studied subclass of Borel equivalence relations are the \define{countable
Borel equivalence relations}, meaning those whose equivalence classes are all
countable. To date, all known non-trivial results showing that $E \nleq_B F$ for countable Borel
equivalence relations $E$ and $F$ use measure theoretic techniques and
Borel probability measures. See for example, the 
cocycle rigidity results used to prove non-reducibility results in \cite{AK} and \cite{T}. An important problem in the theory of countable Borel equivalence
relations is to find new tools beyond just Borel probability measures for proving
non-reducibility results. Such new tools seem to
be needed to solve many open questions in the subject like the problem of
whether every countable Borel equivalence relation is Borel bounded
\cite{BJ}, whether every amenable countable Borel equivalence relation is
hyperfinite \cite[6.2.(B)]{JKL}, the increasing union problem for
hyperfinite Borel equivalence relations \cite[p 194]{DJK}, or the
universal vs measure universal problem \cite[Question 3.13]{MSS}. Those
questions are all known to have positive answers modulo a null set with
respect to any Borel probability measure, but we suspect these questions to
have negative answers in general.

There are several promising candidates for new tools that could prove such non-reducibility results
such as Martin's conjecture \cite{DK}, forcing \cite{Sm}, the
$\mathcal{L}_{\omega_1,\omega}$ model theory of countable structures
\cite{CK}, and the study of topological realizations of countable Borel
equivalence relations \cite{FKSV}. There are also several results showing
certain tools \emph{cannot} prove new non-reducibility results. These
results are often in the context where this tool has an associated
$\sigma$-ideal $I$, and we show that every countable Borel equivalence
relation becomes simple after discarding a set in this ideal, or after
restricting to an $I$-positive set. For example, we have the following
well-known theorem of generic hyperfiniteness:

\begin{thm}[Hjorth-Kechris, Sullivan-Weiss-Wright, Woodin (see {\cite[Theorem
12.1]{KM}})]
  \label{thm:generic_hyp}
  If $E$ is a countable Borel equivalence relation on a Polish space $X$,
  then there is a comeager invariant Borel set $C \subset X$ so that $E
  \restriction C$ is hyperfinite.
\end{thm}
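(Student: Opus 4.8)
The plan is to reduce, via the Feldman--Moore theorem and a change of topology, to a continuous action of a countable group, and then to run a ``generic'' version of the marker argument that Weiss used to prove that $\mathbb{Z}$-actions are hyperfinite. By Feldman--Moore, $E$ is the orbit equivalence relation of a Borel action of a countable group $G = \{g_n : n \in \omega\}$ with $g_0 = 1$. Using a standard change of topology I pass to a finer Polish topology on $X$ with the same Borel sets on which $G$ acts by homeomorphisms; a routine argument (induction on the Borel rank of the sets adjoined in that construction) shows this only shrinks the meager ideal on Borel sets, so any comeager invariant Borel set produced for the new topology remains comeager for the original one. Hence I may assume $G$ acts on $X$ by homeomorphisms (and then intersecting any comeager Borel set with its $G$-hull keeps it comeager and invariant, since the action is now by category-preserving maps). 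Fix a compatible complete metric and, for each $n$, the bounded-degree Borel graph $\Gamma_n$ on $X$ joining each $x$ to $g_0 x, \dots, g_n x$, with associated graph (path) metric $d_n$ on each $E$-class.

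I would then aim to produce a comeager invariant Borel set $C$ and an increasing sequence $F_0 \subseteq F_1 \subseteq \cdots$ of finite Borel equivalence relations on $C$ with $\bigcup_n F_n = E \restriction C$; by definition this exhibits $E \restriction C$ as hyperfinite. The $F_n$ come from a marker construction. Using the Borel combinatorics of bounded-degree graphs (maximal discrete sets can be chosen Borel), recursively choose Borel complete sections $M_0 \supseteq M_1 \supseteq \cdots$ whose distinct points are $d_n$-far apart, with separation tending to $\infty$ with $n$, and with $\bigcap_n M_n = \emptyset$; let $F_n$ be the equivalence relation whose classes are the ``cells'' of $M_n$, where each $x$ is assigned to a $d_n$-nearest point of $M_n$ (ties broken by a fixed Borel linear order on $X$). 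Bounded degree makes every cell finite, while the vanishing of the $M_n$ together with the growing separation forces $\bigcup_n F_n = E$ on the part of $X$ where the construction runs cleanly.

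The heart of the matter --- and precisely where ``comeager'' cannot be replaced by ``all of $X$'', since not every countable Borel equivalence relation is hyperfinite --- is arranging that the cell partitions are \emph{nested}, i.e.\ $F_n \subseteq F_{n+1}$. For a $\mathbb{Z}$-action the cells are just the orbit-intervals between consecutive marker points, so a decreasing sequence of marker sets automatically produces nested partitions; for a general group, two points sharing a level-$n$ cell can land in different level-$(n+1)$ cells. The remedy I would pursue is to choose each $M_{n+1} \subseteq M_n$ by a Baire category argument: once $M_n$ and its cells are fixed, the set of $x$ for which no admissible choice of $M_{n+1}$ is ``coherent'' with the level-$n$ cells along $x$'s orbit should be meager --- intuitively, a generic orbit is topologically nonrecurrent enough that its local combinatorics can be organized consistently along the nested tree of cells --- so one discards it, keeps the comeager invariant Borel remainder, and finally intersects the comeager sets from the successive stages to obtain $C$. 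Verifying that these stagewise obstruction sets really are meager, i.e.\ turning the heuristic about ``generic coherence'' into an honest lemma, is the step I expect to be the main obstacle.
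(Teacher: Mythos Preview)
The paper does not prove this theorem. Theorem~\ref{thm:generic_hyp} is quoted in the introduction as a known result, with attribution to Hjorth--Kechris, Sullivan--Weiss--Wright, and Woodin and a pointer to \cite[Theorem~12.1]{KM}; it serves only as motivation, placing the paper's main theorem in a line of ``canonization'' results (Theorems~\ref{thm:generic_hyp}, \ref{thm:ramsey_generic_hyp}, \ref{thm:dual_ramsey_generic_hyp}). There is therefore nothing in the paper to compare your proposal against.

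As for the proposal itself: the reduction via Feldman--Moore and a change of topology to a continuous action of a countable group is standard and correct, as is the observation that one then needs only a comeager (not necessarily invariant) Borel set, since saturating preserves comeagerness. But your core step is not a proof. You correctly identify that the Weiss marker argument for $\mathbb{Z}$-actions breaks for general $G$ because the Voronoi-type cells need not nest, and you propose to fix this by arguing that the obstruction to coherent nesting is meager at each stage. You then say that verifying this ``is the step I expect to be the main obstacle'' --- which is to say, the one genuinely nontrivial assertion in the outline is left as a hope rather than an argument. I do not know a way to make that particular heuristic precise; the proofs in \cite{KM} proceed quite differently (for instance, via a reduction to a single generic homeomorphism, or via compressibility arguments), and none of them runs a nested-marker scheme repaired stagewise by category. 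If you want to pursue your route you would need an actual lemma showing, for a continuous action of a countable group on a Polish space, that some specific Borel marker refinement can be made cell-coherent off a meager set; absent that, the proposal is an outline with its load-bearing step missing.
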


Recall here that a countable Borel equivalence relation $E$ is hyperfinite
if and only if $E \leq_B E_0$, where $E_0$ is the equivalence relation of
eventual equality on infinite binary sequences~\cite[Theorem 7.1]{DJK}. So
no simple Baire category argument can be used to prove non-hyperfiniteness
results that $E \nleq_B E_0$ for any countable Borel equivalence relation
$E$.
%Recall hyperfinite Borel equivalence relations are completely classified with
%respect to Borel reducibility by Silver's dichotomy theorem~\cite{Si}
%and the Harrington-Kechris-Louveau theorem~\cite{HKL}. Any hyperfinite
%equivalence relation $E$ must be either Borel bireducible with $E_0$, or
%$E$ is \define{smooth}, that is $E \leq_B =_{\R}$ where $=_\R$ is the
%equivalence relation of equality on $\R$.

%Hence, Baire category trivializes all countable Borel equivalence relations
%to be hyperfinite, and simple Baire category arguments cannot be used to
%prove results of the form $E \nleq_B F$ if where $F \geq_B E_0$ is non-hyperfinite.
%

We have an analogous result to generic hyperfiniteness in the context of
the ideal of Ramsey null subset of $[\omega]^\omega$, except that we only
have hyperfiniteness on an $I$-positive set for the Ramsey null ideal:

\begin{thm}[Mathias and Soare \cites{M,So} (see {\cite[Theorem 8.17]{KSZ}})]
  \label{thm:ramsey_generic_hyp}
  If $E$ is a countable Borel equivalence relation on $[\omega]^\omega$, then
  there is an $A \in [\omega]^\omega$ so that $E \restriction
  [A]^\omega$ is hyperfinite. 
\end{thm}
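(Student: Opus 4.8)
The plan is to find an infinite set $A \subseteq \omega$ so that any two $E$-equivalent members of $[A]^\omega$ differ by only a finite set, i.e.\ $E \restriction [A]^\omega \subseteq {=^*}$, where $x =^* y$ means $x \symdiff y$ is finite. This already yields hyperfiniteness: on $[A]^\omega$ the relations $F_k = \{(x,y) : x \symdiff y \subseteq \{0,\dots,k\}\}$ are clopen equivalence relations whose classes have at most $2^{k+1}$ members, they increase with $k$, and their union is exactly $=^*$ on $[A]^\omega$; hence $E \restriction [A]^\omega = \bigunion_k \bigl((E \restriction [A]^\omega) \inters F_k\bigr)$ is an increasing union of finite Borel equivalence relations, so it is hyperfinite.

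To construct $A$, I would apply the Feldman--Moore theorem to write $E = \bigunion_n \{(x, \varphi_n(x)) : x \in [\omega]^\omega\}$ for Borel automorphisms $\varphi_n$ of $[\omega]^\omega$, with $\varphi_0 = \id$. It suffices to arrange, for all $n$ simultaneously, that whenever $x \in [A]^\omega$ and $\varphi_n(x) \in [A]^\omega$ one has $\varphi_n(x) =^* x$; this I would achieve by a standard fusion in the Ellentuck space -- a diagonalization along a decreasing sequence of Mathias conditions, treating each generator $\varphi_n$ against each finite stem in turn -- using that every Borel subset of $[\omega]^\omega$ is completely Ramsey (Galvin--Prikry, Silver, Ellentuck). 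The local step is: given a Borel automorphism $\varphi$ of $[\omega]^\omega$ and a Mathias condition $[s,B]$, produce $[s',B'] \le [s,B]$ such that for every $x \in [s',B']$ either $\varphi(x) =^* x$ or else $\varphi(x) \not\subseteq \{0,\dots,\max s'\} \cup B'$, so that $\varphi$ contributes no new pair to $E$ restricted to the condition. Both alternatives are preserved under further strengthening of the condition, so in the limit one obtains $A$ as wanted: if $x,y \in [A]^\omega$ and $\varphi_n(x) = y$, then since $y \subseteq A$ lies inside the condition at which $\varphi_n$ was handled, the second alternative is impossible and $y =^* x$.

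The crux -- and the step I expect to be the main obstacle -- is this local step, specifically realizing the ``escape'' alternative. Applying the Galvin--Prikry theorem to the Borel set $\{x : \varphi(x) \ne^* x\}$ first reduces to the case in which $\varphi(x) \ne^* x$ for \emph{every} $x$ in the condition; one must then thin $B$ so that the infinitely many coordinates by which $\varphi(x)$ differs from $x$ are pushed outside $B'$, uniformly over all $x \in [s',B']$. The natural route is a second reduction, again via complete Ramseyness, to the case that $\varphi$ is continuous on the condition -- even, by the Pr\"omel--Voigt canonical Ramsey theorem, of a canonical form -- so that each coordinate of $\varphi(x)$ is determined by a finite piece of $x$, followed by a sparse construction of $B'$; the hypothesis that $\varphi$ is a \emph{bijection} is what excludes degenerate possibilities such as $\varphi(x) \subsetneq x$ with $x \setminus \varphi(x)$ infinite. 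All of this is precisely the combinatorial content packaged by the canonization theorems for the Ramsey $\sigma$-ideal, so an alternative is to cite such a theorem and observe that the canonical forms available to a countable Borel equivalence relation on a Ramsey-positive set are all Borel reducible to $E_0$, hence hyperfinite.
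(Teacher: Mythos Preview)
The paper does not give its own proof of this theorem: it is quoted in the introduction as a known result, with attribution to Mathias and Soare and a pointer to \cite[Theorem~8.17]{KSZ}, purely as context for the paper's main theorem. So there is no in-paper argument to compare your proposal against.

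That said, your outline is the standard Mathias--Soare strategy and is essentially what lies behind the cited references: reduce to arranging $E \restriction [A]^\omega \subseteq {=^*}$, generate $E$ by Borel bijections via Feldman--Moore, and run an Ellentuck fusion handling one generator and one stem at a time. Your identification of the local step as the real content is accurate, and your honesty that this is where the work lies is appropriate. One caution: the sentence ``the hypothesis that $\varphi$ is a bijection is what excludes degenerate possibilities such as $\varphi(x) \subsetneq x$ with $x \setminus \varphi(x)$ infinite'' is doing a lot of work and is not obviously justified as stated --- bijectivity of $\varphi$ on all of $[\omega]^\omega$ does not by itself rule out $\varphi(x) \subsetneq x$ on a Mathias condition, and this is exactly the case that cannot be escaped by thinning $B$. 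Pinning this down is precisely the content of the Mathias/Soare arguments (or of the canonization theorem you allude to at the end), so if you were to write this up you would need either to carry out that analysis or to cite it rather than gesture at it.
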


%We note that it is an open whether every countable Borel
%equivalence relation $E$ on $[\omega]^\omega$ is hyperfinite on a Borel Ramsey
%conull set $C \subset [\omega]^\omega$. 

Recently, Panagiotopoulos and Wang have similarly analyzed the dual Ramsey
ideal:

\begin{thm}[{\cite[Theorem 1.2]{PW}}]
  \label{thm:dual_ramsey_generic_hyp}
  If $E$ is a countable Borel equivalence relation on $(\omega)^\omega$,
  then there is an $A \in (\omega)^\omega$ so that $E \restriction
  (A)^\omega$ is smooth. 
\end{thm}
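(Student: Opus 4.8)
The plan is to follow the template of the Mathias--Soare argument behind Theorem~\ref{thm:ramsey_generic_hyp}, but using the dual Ramsey theorem of Carlson--Simpson on $(\omega)^\omega$ --- in its topological (``dual Ellentuck'') form, which exhibits the dual Ramsey space as a topological Ramsey space --- in place of the Galvin--Prikry/Ellentuck machinery on $[\omega]^\omega$. By the Feldman--Moore theorem, $E$ is the orbit equivalence relation of a Borel action of a countable group $\Gamma = \{\gamma_n : n \in \omega\}$ on $(\omega)^\omega$. I would aim for the sharpest conclusion: an $A \in (\omega)^\omega$ such that $(A)^\omega$ is \emph{$E$-discrete}, meaning that for every $n$ and every $X \in (A)^\omega$ either $\gamma_n \cdot X = X$ or $\gamma_n \cdot X \notin (A)^\omega$. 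Granting this, if $X, Y \in (A)^\omega$ and $X \mathrel{E} Y$ then $Y = \gamma_n \cdot X$ for some $n$, and the only surviving possibility is $\gamma_n \cdot X = X$, i.e.\ $X = Y$; so $E \restriction (A)^\omega$ is the identity relation, which is visibly smooth.

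To build $A$, run a fusion $A = A_0 \supseteq A_1 \supseteq \cdots$ of dual Ellentuck conditions, dealing with $\gamma_n$ at stage $n$. The fixed‑point set $F_n = \{X : \gamma_n \cdot X = X\}$ is Borel, so the dual Ramsey theorem lets us shrink to a condition lying inside $F_n$ (then $\gamma_n$ is the identity on the associated copy of $(\omega)^\omega$ and the requirement for $n$ is met) or disjoint from $F_n$. In the disjoint case we must shrink further so that $\gamma_n \cdot X \notin (A_{n+1})^\omega$ for \emph{every} $X$ coarsening $A_{n+1}$. Here one first passes, again via dual Ramsey, to a subcondition on which $\gamma_n$ is continuous in the strong sense that each block of $\gamma_n \cdot X$ is determined by a finite initial part of the block sequence of $X$ (the standard ``finitization'' of a Borel map on a Ramsey space); one then diagonalizes blockwise in the fusion, arranging when choosing the next block of $A_{n+1}$ that whatever $X$ refining the current stub is eventually built, $\gamma_n \cdot X$ is forced to disagree, at some finite level, with the constraint of being a coarsening of $A_{n+1}$. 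Prikry/Mathias‑style bookkeeping (deciding the $k$‑th approximation of $A$ by stage $k$) makes this work uniformly in $X$ and ensures the fusion limit is a genuine element of $(\omega)^\omega$.

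The crux is this blockwise diagonalization: showing that a finitely determined, fixed‑point‑free self‑map of a condition can always be pushed entirely off that condition by refining it. This is precisely where the \emph{ordered} (rigid‑surjection) structure of the dual Ramsey space does the work that is unavailable on $[\omega]^\omega$ --- the ``delete or insert a coordinate'' maps on $[\omega]^\omega$ that keep $E_0$ alive on every $[A]^\omega$, and thereby obstruct improving Theorem~\ref{thm:ramsey_generic_hyp} to smoothness, have no well‑defined counterparts as rigid surjections on $(\omega)^\omega$. Making this rigorous would, I expect, go through the canonical form of the dual Ramsey theorem (Pr\"omel--Voigt) to enumerate the possible behaviors of a finitely determined map on a condition and rule out the pathological ones; and should some $\gamma_n$ resist full expulsion, one would instead settle for making its contribution to $E \restriction (A)^\omega$ finite‑to‑one with a Borel transversal, which still yields smoothness --- the point being that the dichotomy furnished by dual Ramsey is strong enough to deliver ``smooth'', not merely ``hyperfinite''. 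The remaining labor is bookkeeping: running all of this simultaneously for the countably many $\gamma_n$ while keeping the fusion limit in $(\omega)^\omega$.
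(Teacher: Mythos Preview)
The present paper does not prove this theorem. It appears in the introduction as a cited result of Panagiotopoulos and Wang \cite{PW}, listed alongside Theorems~\ref{thm:generic_hyp} and~\ref{thm:ramsey_generic_hyp} as background and motivation for the paper's own main result (Theorem~\ref{thm:bigthm}). There is therefore no proof here to compare your proposal against; you are in effect sketching a possible route to the result of \cite{PW} itself.

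On the proposal as such: the overall architecture --- Feldman--Moore, fusion in the dual Ellentuck topology, handling one group element $\gamma_n$ at a time, and aiming for an $E$-\emph{discrete} $(A)^\omega$ rather than merely a hyperfinite restriction --- is the right shape, and your observation that the rigid-surjection structure of $(\omega)^\omega$ kills the shift-type maps that obstruct smoothness on $[\omega]^\omega$ correctly identifies why the dual setting yields a stronger conclusion. But the step you yourself flag as ``the crux'' is not actually carried out: you assert that a finitely determined fixed-point-free map can be diagonalized entirely off a refined condition, and then gesture at Pr\"omel--Voigt canonization and a fallback to ``finite-to-one with a Borel transversal'' without giving an argument for either. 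That is precisely the place where the substantive work of \cite{PW} lives, and a reader of your proposal cannot tell from what you have written whether the diagonalization succeeds. So as written this is an outline with the key lemma missing, not a proof.
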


Recall here that a Borel equivalence relation $E$ is \define{smooth} if $E
\leq_B =_\R$ where $=_\R$ is the equivalence relation of equality on $\R$.
Similar canonization theorems to the above are also known for certain other Ramsey-type
ideals by work of Kanovei-Sabok-Zapletal \cite[Theorem 8.1]{KSZ}.

The present paper investigates whether Hausdorff measures and Hausdorff dimension
can be used to prove new non-reducibility results between Borel equivalence
relations. We know that Lebesgue
measure on $\cantor$ can be used to prove many interesting non-Borel-reducibility results
(such as Slaman and Steel's proof \cite{SS} that Turing equivalence on $\cantor$
is not hyperfinite, or the result that the shift action of $\F_2$ on
$2^{\F_2}$ is not
hyperfinite \cite{K91}). 
If we take $s$-dimensional Hausdorff measure on $\cantor$ for $s < 1$,
as $s \to 1$, these measures ``approach'' Lebesgue measure. More
generally, we can take arbitrary gauge measures for gauge functions $g$ with
$\lim_{t \to 0} g(t)/t = \infty$, and let $g$ approach the identity function $g(t) = t$
which corresponds to the case of Lebesgue measure. Our hope was
that the spectrum of complexities of Borel equivalence relations that can
be ``seen'' by these $s$-dimensional
Hausdorff measures or gauge measures becomes more and more complex as $s \to
1$.
%, where we
%obtain all the nonreducibility results provable with Lebesgue measure at $s
%= 1$. 

Unfortunately, this is not the case. Our main theorem shows that any gauge
measure $H^g$ with the above-mentioned property trivializes every countable Borel equivalence
relation to be smooth on a set of positive $H^g$-measure. So our main result is
another in the line of work of Theorems~\ref{thm:generic_hyp}, \ref{thm:ramsey_generic_hyp}, and \ref{thm:dual_ramsey_generic_hyp}.

\begin{thm}\label{thm:bigthm}
Suppose $g \from [0,\infty) \to [0,\infty)$ is a gauge function of lower order
than the identity and that $E$ is a countable Borel equivalence relation on $\cantor$. 
Then there is a closed set $A \subset \cantor$ such that $E \restriction A$ is
smooth, and $H^g(A) > 0$. In particular, there is a closed set of Hausdorff
dimension $1$ such that $E \restriction A$ is smooth. 
\end{thm}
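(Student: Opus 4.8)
The plan is to build the closed set $A$ directly as the branch space $[T]$ of a finitely-branching tree $T \subset 2^{<\omega}$, constructed by a fusion argument, together with a probability measure $\mu$ supported on $[T]$. First I would apply the Feldman--Moore theorem to write $E = \bigunion_n \operatorname{graph}(\gamma_n)$ with each $\gamma_n \from \cantor \to \cantor$ a Borel involution and $\gamma_0 = \id$; it then suffices to arrange that $A \inters \gamma_n(A) \subset \operatorname{Fix}(\gamma_n)$ for every $n$, for then $A$ is a partial transversal (an antichain) and $E \restriction A$ is trivial, hence smooth. (This is really the stronger antichain conclusion, and the same argument proves the quasi-order statement of the abstract, with the $\gamma_n$ now Borel partial injections witnessing the comparability graph of $\leq_Q$ via Lusin--Novikov.) To secure $H^g(A) > 0$ I would keep the \define{mass distribution principle} in view: if $\mu([s]) \le C\, g(2^{-|s|})$ for all $s \in 2^{<\omega}$, then $H^g([T]) \ge 1/C > 0$. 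Since $g$ has lower order than the identity we have $g(t)/t \to \infty$ as $t \to 0$, so this Frostman bound is met by trees that are extremely \emph{sparse} --- only about $1/g(2^{-\ell}) \ll 2^\ell$ strings at level $\ell$ --- and the gulf between $1/g(2^{-\ell})$ and $2^\ell$ is a large reserve of cylinders I am free to discard. This slack is the only role the hypothesis on $g$ plays.

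Now I would run the fusion. Given a finite tree $T_k$ with leaves of length $\ell_k$ and a partial $\mu$-mass assignment obeying the Frostman bound, I pick $\ell_{k+1}$ enormous relative to $\ell_k$ and replace each leaf by a small, carefully selected family of its length-$\ell_{k+1}$ descendants, redistributing mass so as to (i) keep the Frostman bound and (ii) make progress on one requirement $R_n$ (``$A \inters \gamma_n(A) \subset \operatorname{Fix}(\gamma_n)$'') chosen by a bookkeeping that revisits each $n$ cofinally often, each time handling $\gamma_n$ at the scales $[\ell_k,\ell_{k+1})$ newly committed. The governing constraint is that one requirement-step must cost only a \emph{bounded} factor --- say $2$ --- and only at a sparse set of ``coding'' coordinates: if the coding coordinates have density tending to $0$ and the $\ell_k$ grow fast enough, then the number of strings at level $\ell$ never falls below $1/g(2^{-\ell})$ and $\mu$ never collapses, so $H^g(A) > 0$ passes to the limit. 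The guiding example is $E_0$, where $\gamma_n$ flips coordinate $n$: one partitions $\omega$ into infinite sparse ``orbits'', forces every branch to be constant on each orbit (a factor-$2$ restriction imposed only at the fresh coordinates of the orbit), and notes that flipping coordinate $n$ destroys constancy on its orbit, so $\gamma_n(x) \notin [T]$ --- that is, $[T]$ is a partial transversal for $E_0$.

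The crux --- and the step I expect to be the main obstacle --- is to perform this coding move for an \emph{arbitrary} Borel involution $\gamma_n$ rather than just a coordinate flip. The idea is to encode, into the sparse fresh coordinates, enough information to pin down which side of $\gamma_n$'s orbit matching a branch lies on; equivalently, to build during the construction a Borel transversal $D_n$ of the $\gamma_n$-orbits with $[T] \subset D_n \union \operatorname{Fix}(\gamma_n)$. Once this holds, any $x \in [T]$ with $\gamma_n(x) \ne x$ lies in $D_n$, so $\gamma_n(x) \in \gamma_n(D_n)$, which is disjoint from $D_n \union \operatorname{Fix}(\gamma_n) \supset [T]$; hence $\gamma_n(x) \notin [T]$, as required. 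The difficulty is that $\gamma_n$ is merely Borel, so a cylinder does not lie on a single side of the matching; one must therefore use successively finer clopen approximations of the relevant Borel sets, committing each branch to a side only up to a small measure error and pushing the residual into later stages, using the slack again to keep the committed ``bad'' mass summably small, so that the limit set really is contained in $D_n \union \operatorname{Fix}(\gamma_n)$ and not merely almost so. One must also manage the interaction of the countably many requirements --- the coding coordinates for different $\gamma_n$ and different scales must be chosen disjoint and jointly sparse --- and, at the end, check both that $\mu(A) > 0$ and that each $R_n$, handled cofinally often, is satisfied in the limit. Taking $g(t) = t\log(1/t)$, which is of lower order than the identity and for which $H^g(A) > 0$ forces $\dim_H A = 1$ (because $g(t) = o(t^s)$ for every $s < 1$ while $\dim_H \cantor = 1$), then gives the closed set of Hausdorff dimension $1$ on which $E$ is smooth.
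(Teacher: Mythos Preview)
Your overall strategy matches the paper's: build a closed tree $[T]$ carrying a probability measure $\mu$ satisfying a Frostman bound (so $H^g([T])>0$), and arrange that $[T]$ is a partial transversal for $E$ (so $E\restriction[T]$ is trivially smooth). The paper likewise reduces to countably many Borel functions via Lusin--Novikov, and likewise observes that the only role of $g\prec\id$ is to allow non-splitting levels of sparse density without violating the Frostman bound. Your $E_0$ example is exactly the template, and your closing remark about choosing a single $g$ witnessing dimension $1$ is how the paper finishes as well. (Your worry about ``not merely almost so'' is unnecessary: once the bad set is $\mu$-null, inner regularity gives a closed positive-$\mu$ subset that is an exact antichain; the paper does precisely this.)

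The gap is where you flag it. Your transversal reformulation $[T]\subset D_n\cup\operatorname{Fix}(\gamma_n)$ is equivalent to the goal but does not help reach it, and the phrase ``encode which side a branch lies on'' is off: you want \emph{every} branch on the same side, so there is nothing to encode. The substantive difficulty is that the bad set $\{x\in[T]:\gamma_n(x)\in[T],\ \gamma_n(x)\neq x\}$ depends on the entire tree $[T]$, while a fusion step commits only a finite piece; your ``clopen approximations, push the residual forward'' does not explain how to halve this set at one coordinate when the set itself is a function of all future choices.

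The paper's resolution is to separate the two roles of the construction. The sparse set $A\subset\omega$ of non-splitting levels is fixed once (to secure the Frostman bound), but the \emph{choice of child} at each non-splitting node is recorded by a parameter $y\colon 2^{<\omega}\to 2$, and one proves that for \emph{comeagerly many} $y$ the tree $[T_{A,y}]$ has $\mu_{A,y}$-null bad set. The halving is then clean: fix $f_i$ and a string $s$ with $x\supset s$, $f_i(x)\not\supset s$; pick $n\in A$ large and split the bad set $B_y$ by the bit $f_i(x)(n)$. One half has at most half the measure, but \emph{which} half depends on the as-yet-unspecified $y$; the set of $y$ for which the $0$-half is small is analytic, hence has the Baire property, hence is comeager or meager in any basic open set. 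In either case one extends the finite commitment on $y$ by forcing $y(t)$ to the appropriate constant for all $t$ of length $n$ with $t\not\supset s$: this leaves $[T_{A,y}]\cap[s]$ untouched (so $x$ survives) but ejects $f_i(x)$ from $[T_{A,y}]$ on the killed half. Iterating densely over all $(i,s)$ yields the comeager set of good $y$. This Baire-category move on the parameter space is the device your fusion argument is missing.
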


We note that in contrast, the arguments of \cite{SS} and \cite{K91} show
that Turing equivalence or the orbit equivalence relation of the shift
action of $\F_2$ on $2^{\F_2}$ are both non-hyperfinite on any positive
measure set with respect to Lebesgue measure. 

By using an appropriate bijection between $\cantor$ and $[0,1]^n$ we also show
that every countable Borel equivalence relation on $\R^n$ is smooth on a set of
Hausdorff dimension $n$ (Corollary~\ref{cor:unitinterval}). 

We also prove some generalizations of these results to locally countable Borel
quasi-orders on $\R^n$ and $2^\omega$. For example, if $\leq_Q$ is any
locally countable Borel quasi-order on $2^\omega$, then there is a closed set $A \subset \cantor$ so that $A$ is an antichain under $\leq_Q$, and $A$ has Hausdorff dimension $1$. 
%And
%jif $E$ is any countable Borel equivalence relation on $\R$, then there
%is a Borel set $A$ of Hausdorff dimension $1$ so that $E \restriction A$ is
%smooth.

\section{Preliminaries}
A quasi-order $\leq_Q$ on a space $X$ is a reflexive transitive relation on
$X$. We say that $\leq_Q$ is \define{locally countable} if for every $y \in
X$, $\{x \in X \colon x \leq_Q y\}$ is countable. We say that $\leq_Q$ is
Borel if is Borel as a subset of $X^2$. 
Among the examples of locally countable Borel quasi-orders are countable Borel
equivalence relation -- equivalence relations on $X$ whose classes are
all countable. 
A reference for the theory of countable Borel equivalence relations and locally
countable Borel quasi-orders is the recent survey paper~\cite{K24} of
Kechris. Note that by 
Lusin-Novikov uniformization \cite[18.10]{K95}, if $\leq_Q$ is a countable
Borel quasi-order, then there are countably many Borel functions $(f_i:X\to
X)_{i\in\omega}$ so that $y \leq_Q x$ if and only if and only if there exists an $i \in \omega$ so that
$f_i(x) = y$. 

Our conventions surrounding Hausdorff dimension and gauge measures 
follow those of Rogers~\cite{R}. Recall that gauge measures generalize the
idea of Hausdorff measures and Hausdorff dimension to arbitrary gauge
functions. A \define{gauge function} $g
\colon [0,\infty) \to [0,\infty)$ is an
increasing function that is continuous on the right, $g(0) = 0$, and $g(t)
> 0$ for $t > 0$. If $(X,d)$ is a metric space, then 
recall that we define the $g$-measure $H^g$ on subsets of $X$ as follows:
For every $\delta > 0$, let 
\[H^g_\delta(A)=\inf\{\sum_{i =
0}^\infty g(\diam(U_i)) \colon (U_i) \text{ is an open cover of $A$ by sets
of diameter $<\delta$}\}.\]
Then the \define{$g$-measure} $H^g$ is defined as $\lim_{\delta\to 0^+}
H^g_\delta$.
\begin{definition}
  Suppose that $f$ and $g$ are gauge functions. We write $f\prec g$ if $\lim_{t\to 0^+} g(d)/f(d)=0$ (or equivalently $\lim_{t\to 0^+} f(d)/g(d)=\infty$) and say that $g$ has \define{higher order} than $f$.
\end{definition}
Below, we work with gauge measures on the Cantor space $2^\omega$ of infinite binary sequences equipped with the metric
$d(x,y) = 2^{-n}$ where $n$ is least such the $n$th bit of $x$ and $y$ differ: $x(n) \neq y(n)$. We will also
work with the spaces $\R^n$ with the Euclidean metric. In a metric space,
we let $B_r(x)$ denote the open ball of radius $r$ around a point $x$. 

The $s$-dimensional Hausdorff measure is
the gauge measure given by the power functions $g(t) = t^s$. Here if $g(t) = t$,
then $H^g$ is Lebesgue measure on $2^\omega$, and if $g(t) = t^n$, then $H^g$ is
Lebesgue measure on $\R^n$. We will often write $H^s$ for the $s$-dimensional
Hausdorff measure for $s\in \mathbb R^+$. Then the \define{Hausdorff dimension} of a set $A$ is 
\[ \dim(A)=\inf \{ s: H^s(A)=0\}=\sup\{ s: H^s(A)=\infty\}.\]

We let $2^{< \omega}$ denote the set of finite binary strings, and we use the
letters $s,t$ for its elements. We let $|s|$ denote the length of $s$ and $s(n)$
is the $n$th bit of $s$. Finally, if $s, t \in 2^{< \omega}$, we let $s
\concat t$ denote the concatenation of $s$ and $t$. 
%Recall thaWe write $s \subsset
%t$ if A \define{tree} is 

\section{Proof of the main theorem}

First, we fix notation for describing a binary tree $T$ where at each level,
either all nodes at this level \define{split} (i.e. have two successors in $T$), or all
nodes at this level have exactly one successor in $T$.

\begin{definition}
Given a set $A \subset \omega$, and a function $y \from 2^{< \omega} \to 2$,
let $T_{A,y} \subset 2^{< \omega}$ be the set of $t \in 2^{< \omega}$ such
that for all $n < |t|$, if $n \in A$, then $t(n) = y (t \restriction
n)$.  
\end{definition}
That is, $T_{A,y}$ is the tree where if $t \in T_{A,y}$ and $|t| \notin
A$, then both $t \concat 0$ and $t \concat 1$ are in $T_{A,y}$. However, if
$|t| \in A$, then the only successor of $t$ in $T_{A,y}$ is $t \concat
y(t)$.

We also fix notation for the uniform measure on $[T_{A,y}]$: 
\begin{definition}
Let $\mu_{A,y}$ be the uniform measure on $[T_{A,y}]$, so that if $t \in
T_{A,y}$ is a splitting node, then both its successors have equal
measure, i.e., $\mu_{A,y}(\nbhd{t \concat 0}) = \mu_{A,y}(\nbhd{t \concat 1})$.
\end{definition}
Note that since all nodes in $T_{A,y}$ at a given level are either
splitting nodes, or none are splitting nodes, this implies that if $s,t \in
[T_{A,y}]$ have the same length, then $\mu_{A,y}(\nbhd{s}) =
\mu_{A,y}(\nbhd{t})$,
and indeed if $t \in T_{A,y}$ has length $n$, then $\mu_{A,y}(\nbhd{t}) = 2^{-n
+ |A \inters n|}$, since $|A \inters n|$ gives the number of non-splitting
levels below $n$. 

Our first lemma relates the rate at which elements appear in a set $A\subseteq \omega$ with the rate of convergence of gauge functions $g$ such that all $\mu_{A,y}$-positive subsets of $[T_{A,y}]$ have positive $g$-measure $H^g$.
%figure out the standard terminology for a "nontrivial" gauge function.
%I.e. one that doesn't assign the whole space measure $0$, and is not
%equivalent the usual measure. Below we've used the assumption
%$\lim_{t \to 0^+} g(t)/t = 0$
\begin{lemma}\label{lem:sparse_measure}
  Suppose $g$ is a gauge function with $g\prec \id$ and that $A\subseteq \mathbb N$ is such that 
  \begin{equation}\label{eq:sparsityofA}\tag{$\dagger$} |
  A \cap n| \leq \log_2 \left( \frac{g(2^{-n})}{2^{-n}} \right) \end{equation} 
  for all but finitely many $n$. Then for all $y \from 2^{< \omega}
  \to 2$, and all $B \subset [T_{A,y}]$ with $\mu_{A,y}(B)
  > 0$, we have $H^g(B)>0$.
\end{lemma}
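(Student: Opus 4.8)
The plan is to estimate $H^g(B)$ from below by comparing an arbitrary economical open cover of $B$ against the measure $\mu_{A,y}$. The standard way to obtain a positive lower bound on a Hausdorff-type measure is a mass-distribution argument: if we can show there is a constant $c > 0$ such that $\mu_{A,y}(U) \leq c\, g(\diam U)$ for every open set $U$ (or at least every set $U$ of sufficiently small diameter that meets $[T_{A,y}]$), then for any countable cover $(U_i)$ of $B$ by small sets we get $\sum_i g(\diam U_i) \geq c^{-1} \sum_i \mu_{A,y}(U_i) \geq c^{-1} \mu_{A,y}(B) > 0$, and taking the infimum over covers and then $\delta \to 0$ yields $H^g(B) \geq c^{-1}\mu_{A,y}(B) > 0$.

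First I would reduce to basic clopen sets: every set $U$ of diameter $< 2^{-n}$ in $\cantor$ is contained in a single cylinder $[t]$ with $|t| = n$, and $\diam[t] = 2^{-n} \le 2\diam U$ (in fact we can be careful about constants, but $g$ being increasing and the factor being bounded, monotonicity of $g$ absorbs the constant up to a harmless multiplicative factor). So it suffices to bound $\mu_{A,y}([t])$ in terms of $g(2^{-|t|})$. Now by the computation recorded just before the lemma, for $t \in T_{A,y}$ with $|t| = n$ we have $\mu_{A,y}([t]) = 2^{-n + |A \cap n|}$, and for $t \notin T_{A,y}$ we have $\mu_{A,y}([t]) = 0 \le g(2^{-n})$ trivially. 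For $t \in T_{A,y}$, the sparsity hypothesis \eqref{eq:sparsityofA} gives, for all but finitely many $n$,
\[ |A \cap n| \le \log_2\!\left( \frac{g(2^{-n})}{2^{-n}} \right), \]
hence $2^{|A\cap n|} \le g(2^{-n})/2^{-n}$, i.e. $\mu_{A,y}([t]) = 2^{-n}\cdot 2^{|A\cap n|} \le g(2^{-n})$. So the desired inequality $\mu_{A,y}([t]) \le g(\diam[t])$ holds for all $t \in T_{A,y}$ with $|t|$ large enough; the finitely many exceptional levels only affect the constant $c$ (since for each such level there is a uniform bound on the ratio $\mu_{A,y}([t])/g(2^{-|t|})$, a finite maximum over finitely many bad $n$).

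The remaining care is bookkeeping: assembling these per-cylinder bounds into the cover inequality while tracking the constants coming from (a) replacing an arbitrary small set $U$ by a cylinder containing it, and (b) the finitely many levels $n$ where \eqref{eq:sparsityofA} may fail. Neither step is deep — this is a routine mass-distribution / Frostman-type argument — so I do not expect a real obstacle; the only thing to be slightly attentive to is that $g$ being merely right-continuous and increasing (not continuous) is enough, because all we use is monotonicity to pass from $\diam U$ to $\diam[t] \geq \diam U$, and the evaluation of $g$ at the dyadic points $2^{-n}$. I would also note that $g \prec \id$ is not actually needed for this direction of the argument beyond ensuring the gauge is genuinely sub-Lebesgue so that such sparse $A$ with infinite complement exist and the statement is non-vacuous; the inequality \eqref{eq:sparsityofA} already encodes everything required.
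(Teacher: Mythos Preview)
Your proposal is correct and follows essentially the same mass-distribution/Frostman argument as the paper: both show $\mu_{A,y}(U)\le g(\diam U)$ for all sufficiently small $U$ (using the formula $\mu_{A,y}([t])=2^{-n+|A\cap n|}$ together with~\eqref{eq:sparsityofA}) and then sum over a cover to get $H^g(B)\ge \mu_{A,y}(B)>0$. The only cosmetic difference is that the paper works with open balls and observes that in $\cantor$ any set of diameter $r$ is contained in a ball of the same diameter, so no extra constant is needed; your factor-of-$2$ worry is harmless but unnecessary.
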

\begin{proof}
  Our proof relies on the following claim, which is essentially one direction of
  Frostman's lemma.
  \begin{claim}
    Suppose $\mu$ is a Borel probability measure on $2^\omega$ such that
    for all $x \in 2^\omega$ and sufficiently small $r > 0$, $g(r) > \mu(B_r(x))$. 
    Then $\mu(B)>0$ implies $H^g(B) > 0$.
  \end{claim}
  \begin{proof}
    Consider an open cover $(U_i)$ of $x$ by sets of sufficiently small diameter $r$. We
    may assume the cover is by open balls $U_i = B_{r_i}(x_i)$,
    since any set of diameter $r$ in $2^\omega$ is contained in an open
    ball of the same diameter. Then
    \[ \sum_{i} g(\diam(U_i)) \geq \sum_{i} g(r_i) \geq \sum_i
    \mu(B_{r_i}(x_i)) \geq \mu(B)>0.\]
    So, in particular $H^g_{\delta}(B)\geq \mu(B)$ for any $\delta<r$, and so $H^g(B) \geq
    \mu(B)$.
  \end{proof}
  It remains to show that $\mu_{A,y}$
  satisfies the conditions of the claim. 
  Now we have
$\mu_{A,y}(B_{2^{-n}}(x)) = 2^{-n+|A\cap n|} < g(2^{-n})$ for all but
finitely many $n$, where the last
inequality follows from ($\dagger$).  

Finally, note that there are infinite sets $A$ satisfying ($\dagger$)
  since by assumption that $g \prec \id$, we have $\lim_{t\to 0^+}
  \frac{g(t)}{t}=\infty$.
\end{proof}
%\begin{remark}
%  Note that Lemma~\ref{lem:sparse_measure} fails if we remove the limit condition on
%  our gauge function. In particular, if we consider Hausdorff measures $H^s$ for
%  $s\geq 1$, the inequality~\ref{eq:sparsityofA} will not hold even for finite
%  sets.
%  This is in line with the fact that $H^1$ is just the uniform measure and that
%  equivalence relations such as $E_\infty$ are not hyperfinite on a set of
%  positive 
%  measure. TODO: SOURCE
%\end{remark}

Next we show that if $\leq_Q$ is a locally countable Borel quasi-order and
$y$ is sufficiently generic, then $\mu_{A,y}$-a.e.\ $x \in [T_{A,y}]$ is not
$\leq_Q$-above any other element of $[T_{A,y}]$. So there is a
$\mu_{A,y}$-conull $\leq_Q$-antichain in $[T_{A,y}]$. 

\begin{lemma}\label{lem:comeager_good}
  Suppose $A \subset \omega$ is infinite, and
  $(f_i)_{i \in \omega}$ is a countable set of Borel functions on $\cantor$.
  Then for a comeager set of $y \colon 2^{< \omega} \to 2$, for
  $\mu_{A,y}$-a.e.\ $x \in [T_{A,y}]$, for all $i \in \omega$, if $f_i(x) \neq x$, then $f_i(x)
  \notin [T_{A,y}]$.
\end{lemma}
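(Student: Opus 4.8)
The plan is to fix a single Borel function and run a forcing argument. It suffices to show that for a comeager set of $y$ one has $\mu_{A,y}(N_y^i)=0$ for every $i$, where
$N_y^i:=\{x\in[T_{A,y}]\from f_i(x)\neq x\text{ and }f_i(x)\in[T_{A,y}]\}$.
So fix a Borel $f:=f_i$. The key elementary observation is that two distinct points $x,x'$ of $[T_{A,y}]$ must first differ at a level in $B:=\omega\setminus A$: if $m$ is least with $x(m)\neq x'(m)$ then $x\restriction m=x'\restriction m$, so $m\in A$ would force $x(m)=y(x\restriction m)=y(x'\restriction m)=x'(m)$. Applying this with $x'=f(x)$ and writing $n=m+1$, $s=x\restriction n$, $t=f(x)\restriction n$ (so $s\neq t$ in $2^n$), we get
\[
N_y^i\subseteq\bigunion_{n}\bigunion_{\substack{s,t\in 2^n\\ s\neq t}}C_{s,t},\qquad
C_{s,t}:=\bigl([s]\inters[T_{A,y}]\bigr)\inters f^{-1}\bigl([t]\inters[T_{A,y}]\bigr).
\]
Since this is a countable union, it is enough to prove that for each fixed $n$ and each fixed pair $s\neq t$ in $2^n$, the set of $y$ with $\mu_{A,y}(C_{s,t})=0$ is comeager.

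I would prove this by forcing with finite partial functions $p\from 2^{<\omega}\to 2$ (Cohen forcing adding $\dot y$), showing that the empty condition forces $\mu_{A,\dot y}(\dot C_{s,t})=0$. Given a condition $p$ and $\varepsilon>0$, extend $p$ to some $q$ forcing $\mu_{A,\dot y}(\dot C_{s,t})<\varepsilon$ as follows. Since $A$ is infinite there are infinitely many levels of $A$ above $n$; process them in increasing order $\ell_1<\ell_2<\dots$, and at stage $j$, for each node $w$ of the current tree with $w\supseteq t$ and $|w|=\ell_j$: first pass to an extension of the current condition that decides whether
$\mu_{A,\dot y}\bigl(([s]\inters[T_{A,\dot y}])\inters f^{-1}[w\concat 0]\bigr)\le
\mu_{A,\dot y}\bigl(([s]\inters[T_{A,\dot y}])\inters f^{-1}[w\concat 1]\bigr)$
(a Borel statement about $\dot y$, hence decided by some extension), and then adjoin the value of $y$ at $w$ that keeps whichever child gives the smaller quantity. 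The crucial point is that the value of $y$ at a string properly extending $t$ does not affect $[s]\inters[T_{A,\dot y}]$ at all — such a string cannot extend $s$ nor be an initial segment of $s$ — so the decision just made is preserved, and the prunings at distinct level‑$\ell_j$ nodes above $t$ do not interfere with one another. Because restricting the subtree above $t$ to the lighter child at a level of $A$ at most halves the total $\mu_{A,\dot y}$‑mass of $([s]\inters[T_{A,\dot y}])\inters f^{-1}(\text{current truncation of the subtree above }t)$ — and this mass dominates $\mu_{A,\dot y}(\dot C_{s,t})$ — after $k$ stages it is forced to be $\le 2^{-k}$. Taking $k$ with $2^{-k}<\varepsilon$ gives $q$, so the empty condition forces $\mu_{A,\dot y}(\dot C_{s,t})=0$.

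Finally, let $M$ be a fixed countable elementary submodel containing Borel codes for $A$ and all the $f_i$. If $y$ is Cohen‑generic over $M$ — a comeager condition on $y$ — then, by the previous paragraph applied inside $M$ (and upward absoluteness of the resulting $\boldsymbol{\Pi}^1_1$ statement), $\mu_{A,y}(C_{s,t})=0$ for every $i$ and every $s\neq t$; by the displayed inclusion, $\mu_{A,y}(N_y^i)=0$ for all $i$, which is the lemma.

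I expect the main obstacle to be the bookkeeping in the second step: carefully verifying that each pruning of the subtree above $t$ genuinely halves the relevant pushforward mass, tracking the interplay between the free (non‑$A$) branchings and the $A$‑levels used for pruning, and checking that the "decide, then commit" steps never commit the value of $y$ at a node that a later step needs to be free. Phrasing everything through the forcing relation is precisely what lets this go through even though $f$ is only Borel: a finite condition does not approximate the pushforward measure $f_*\bigl(\mu_{A,y}\restriction([s]\inters[T_{A,y}])\bigr)$, but "deciding which child is lighter" is a legitimate forcing move regardless, and it is all we need.
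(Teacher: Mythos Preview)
Your argument is correct and follows essentially the same strategy as the paper's proof: both reduce to showing that for each $f_i$ and each ``separating'' string (you use a pair $s\neq t$, the paper uses a single $s$), the bad set is $\mu_{A,y}$-null for comeagerly many $y$, and both accomplish this by iteratively halving the bad set using levels of $A$ to commit $y$ on the target side while exploiting that the relevant measure depends only on $y$ at nodes compatible with the source string. The only differences are packaging: you phrase the iteration via Cohen forcing over a countable model and prune node-by-node above $t$, whereas the paper works directly with Baire category and, at a single $A$-level $n$, uses the Baire property of the analytic set $\{y:\lambda(B^0_{y,p})<\tfrac12\lambda(B_{y,p})\}$ to set $y$ uniformly to one bit at \emph{all} length-$n$ nodes not extending $s$.
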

\begin{proof}
If $f_i(x) \neq x$, then there is some $s \in 2^{< \omega}$ so that $x
\supset s$ and $f_i(x) \nsupset s$. 
Fix such $i \in \omega$ and $s \in 2^{< \omega}$. It suffices to show 
that for comeagerly many $y$, the set of $x \in [T_{A,y}]$ such that $x
\supset s$, $f_i(x) \nsupset s$ and $f_i(x) \in [T_{A,y}]$ is
$\mu_{A,y}$-null. The argument
will be by showing that as we build a generic $y$, it is dense to halve the measure of $x \in
[T_{A,y}]$ so that $f_i(x) \in [T_{A,y}]$.

By definition of $T_{A,y}$, we have $f_i(x) \notin [T_{A,y}]$ is equivalent to
$(\exists n \in A) f_i(x)(n) \neq y(f_i(x) \restriction n)$. Define 
\[B_y = \{x \in
[T_{A,y}] \colon x \supset s \land f_i(x) \nsupset s \land (\forall
n)(n \in A \implies f_i(x)(n) = y (f_i(x) \restriction n))\}.\] Elements in $B_y$
are the ``bad'' elements of $[T_{A,y}]$ and we want to show that for comeagerly many $y$, $\mu_{A,y}(B_y) = 0$.

If $p$ is a function from $2^k \to 2$, then let $B_{y,p} = \{x \in
[T_{A,y}] \colon x \supset s \land f_i(x) \nsupset s \land (\forall n
\leq k)(n \in A \implies f_i(x)(n) = p(f_i(x) \restriction n)\}$. The
difference between $B_y$ and $B_{y,p}$ is that the last $y$ in the
definition of $B_y$ has become $p$ in $B_{y,p}$.
So, if
$p_k = y \restriction 2^k$, then $B_y = \biginters B_{y,p_k}$.
We claim that given any $p \from 2^k \to 2$, there is a dense set of $q \supset p$
such that for comeagerly many $y \in \nbhd{q}$, we have $\lambda(B_{y,q}) \leq
\frac{1}{2}
\lambda(B_{y,p})$. This claim implies that for comeagerly many $y$,
$\mu_{A,y}(B_y) = 0$, which will conclude the proof. 

Suppose $k' > k$ and $p' \from 2^{k'} \to 2$ extends $p$. We need to show that
there is a $q$ extending $p'$ so that for comeagerly many $y \in \nbhd{q}$, we
have $\lambda(B_{y,q}) \leq \frac{1}{2} \lambda(B_{y,p})$.
Suppose $n \in A$ is such that
$n > k'$. Let $B_{y,p}^{0} = \{x \in B_{y,p} \colon f_i(x)(n) = 0\}$ and
$B_{y,p}^{1} = \{x \in B_{y,p} \colon f_i(x)(n) = 1\}$, so $B_{y,p} =
B^{0}_{y,p} \disjointunion B^{1}_{y,p}$. Now consider $C = \{y \colon
\lambda(B^{0}_{y,p}) < \frac{1}{2}\lambda(B_{y,p})\}$. This set is analytic
and so it has the Baire property. First, consider the case that $C$ is nonmeager in $\nbhd{p'}$. So there is
some $q' \supset p'$ such that $C$ is comeager in $\nbhd{q'}$. We may assume
that $q' \from 2^m \to 2$ where $m > n$.
%Now since the set
%$[T_{A,y}] \inters [q]$ only depends on $x \in [T_{A,y}]$ such that $x \supset s$,
Let
\[q(t) = \begin{cases}
0 & \text{ if $|t| = n$ and $t \nsupset s$} \\
q'(t) & \text{ otherwise}
\end{cases}\]
Since $q(t) = q'(t)$ for all $t$ compatible with $s$, and since the set
$B_{y,p}^0$ only depends on the values of $y(t)$ such that $t$ is
compatible with $s$, we have that $C$ is also comeager in $\nbhd{q}$.
%(EXPLAIN WHY) 
Finally, since $q(t) = 0$ if $|t| = n$ and $t \nsupset s$, we have that 
$B_{y,q} \subset B^{0}_{y,p}$, and so $\lambda(B_{p,q})
\leq \frac{1}{2} \lambda(B_{y,p})$.

If $C$ is meager in $\nbhd{p'}$, then the set $\{y \colon
\lambda(B^1_{y,p}) \leq \frac{1}{2} \lambda(B_{y,p})\}$ is comeager in
$\nbhd{p'}$ (and in particular it is nonmeager). The argument in this case
is identical to the above argument, just changing the roles of $0$ and $1$. 
This finishes the proof of the claim. 
\end{proof}

\begin{thm}\label{thm:main_thm}
  If $\leq_Q$ is a locally countable Borel quasi-order on $\cantor$, and $g$ is a
  gauge function such that $g\prec \id$, then there is a closed
  $\leq_Q$-antichain $B \subset \cantor$ with $H^g(B) > 0$.
\end{thm}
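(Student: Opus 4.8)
The plan is to synthesize Lemmas~\ref{lem:sparse_measure} and~\ref{lem:comeager_good} through Lusin--Novikov uniformization. First, since $\leq_Q$ is a locally countable Borel quasi-order, fix by Lusin--Novikov uniformization countably many Borel functions $(f_i)_{i\in\omega}$ on $\cantor$ such that $y \leq_Q x$ if and only if $f_i(x) = y$ for some $i$. Next I would choose the index set $A$: because $g \prec \id$ we have $\lim_{t\to 0^+} g(t)/t = \infty$, hence $\log_2\bigl(g(2^{-n})/2^{-n}\bigr) \to \infty$, so one can enumerate an infinite set $A = \{a_0 < a_1 < \cdots\} \subseteq \omega$ whose elements occur sparsely enough that $|A \cap n| \leq \log_2\bigl(g(2^{-n})/2^{-n}\bigr)$ for all but finitely many $n$; thus $A$ is infinite and satisfies~\eqref{eq:sparsityofA}.

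Now apply Lemma~\ref{lem:comeager_good} to this $A$ and these $(f_i)$: for a comeager, hence nonempty, set of $y \from 2^{<\omega} \to 2$, the Borel set
\[ G_y = \{x \in [T_{A,y}] \colon (\forall i\in\omega)\bigl(f_i(x) \neq x \implies f_i(x) \notin [T_{A,y}]\bigr)\} \]
is $\mu_{A,y}$-conull. Fix one such $y$ and set $G = G_y$. Then $G$ is a $\leq_Q$-antichain: if $x,x' \in G$ were distinct with $x \leq_Q x'$, then $f_i(x') = x$ for some $i$, and as $x \neq x'$ the defining property of $G$ forces $f_i(x') \notin [T_{A,y}]$, contradicting $x = f_i(x') \in G \subseteq [T_{A,y}]$; the case $x' \leq_Q x$ is symmetric.

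Finally, $\mu_{A,y}(G) = 1 > 0$, so Lemma~\ref{lem:sparse_measure} gives $H^g(G) > 0$. To upgrade $G$ to a closed set, use that $\mu_{A,y}$ is a Borel probability measure on the compact metric space $[T_{A,y}]$ and is therefore inner regular: pick a closed $B \subseteq G$ with $\mu_{A,y}(B) > 0$. Since $[T_{A,y}]$ is closed in $\cantor$, $B$ is closed in $\cantor$; $B$ is a $\leq_Q$-antichain as a subset of $G$; and $\mu_{A,y}(B) > 0$ yields $H^g(B) > 0$ by Lemma~\ref{lem:sparse_measure} once more, completing the proof. The substantive work has already been carried out in the two lemmas; the only points requiring care here are that $A$ must be chosen to meet both hypotheses simultaneously — infinite for Lemma~\ref{lem:comeager_good}, sparse in the sense of~\eqref{eq:sparsityofA} for Lemma~\ref{lem:sparse_measure}, which is exactly what $g \prec \id$ permits — and the routine inner-regularity step extracting a closed positive-measure subset.
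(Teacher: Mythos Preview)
Your proposal is correct and follows essentially the same approach as the paper's proof: Lusin--Novikov to get the $(f_i)$, choose $A$ sparse and infinite (which $g\prec\id$ permits), pick $y$ via Lemma~\ref{lem:comeager_good}, observe the conull set is an antichain, and pass to a closed positive-$\mu_{A,y}$-measure subset by inner regularity before invoking Lemma~\ref{lem:sparse_measure}. You supply a bit more detail than the paper (the explicit antichain verification and the remark that $[T_{A,y}]$ is closed in $\cantor$), and the first application of Lemma~\ref{lem:sparse_measure} to $G$ is redundant, but the argument is the same.
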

\begin{proof}
By Lusin-Novikov uniformization, fix countably many Borel functions $(f_i)$ generating $\leq_Q$ and $A \subset \omega$ that is
sufficiently sparse as in Lemma~\ref{lem:sparse_measure}. Let $y \colon 2^{< \omega} \to 2$
be such that for
$\mu_{A,y}$-a.e.\ $x \in [T_{A,y}]$, if $f_i(x) \neq x$, then $f_i(x)
\notin T_{A,y}$. Such a $y$ exists since there is a comeager set of such
$y$ by Lemma~\ref{lem:comeager_good}. Thus, there is a $\mu_{A,y}$-conull
set $C \subset [T_{A,y}]$ that forms a $\leq_Q$-antichain, so by inner regularity
of $\mu_{A,y}$ there is a closed set $B \subseteq C$
with $\mu_{A,y}(B) > 0$ that is a $\leq_Q$-antichain. By
Lemma~\ref{lem:sparse_measure} $H^g(B)>0$.

% Hence, there is a closed subset of $B$ with positive $H^g(B)$-measure. The elements of this set form a $\leq_Q$-antichain.
%Let $B = \{x \in [T_{A,y}] \colon \exists
%  i (f_i(x) \neq x
%\land f_i(x) \notin [T_{A,y}])\}$, so $B$ is null. Then $[T_{A,y}] \setminus
%B$ has infinite $H^g$-measure and so it contains a closed set of infinite
%$H^g$-measure.
\end{proof}
\begin{proof}[Proof of Theorem~\ref{thm:bigthm}]
  Suppose $E$ is a countable Borel equivalence relation on $\cantor$. Then
  viewing $E$ as a locally countable Borel quasi-order, if $B \subset
  \cantor$ is a closed antichain for $E$ so that $H^g(B) > 0$, then $B$
  meets each $E$-class in at most one point, so $E \restriction B$ is
  smooth.

  To see the last part of the theorem, let $g_s = t^s$ be the gauge
  function defining the $s$-dimensional Hausdorff measure $H^s$. Choose a
  gauge function $g \prec \id$ so that $g_s \prec g$ for all $s \preceq 1$,
  for instance $g(t) = t^{1 - \frac{1}{t}}$. Then $H^g(B) > 0$ implies
  $H^s(B) > 0$ for all $s < 1$, so $\dim(B) = 1$.
\end{proof}

By an analogous argument to the proof of Theorem~\ref{thm:bigthm}, 
 we get a similar result for locally countable Borel quasi-orders.
\begin{corollary}\label{cor:quasi-orderantichain}
  Suppose that $Q$ is a locally countable Borel quasi-order on $2^\omega$. Then there is a closed $Q$-antichain of Hausdorff dimension $1$.
\end{corollary}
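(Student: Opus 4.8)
The plan is to derive this immediately from Theorem~\ref{thm:main_thm} together with the choice of gauge function already made in the proof of Theorem~\ref{thm:bigthm}. Note that Theorem~\ref{thm:main_thm} is stated for an arbitrary locally countable Borel quasi-order, so no feature of $Q$ beyond what appears there requires separate treatment; the only extra ingredient needed is to arrange that the antichain has Hausdorff dimension exactly $1$.

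Concretely, I would first fix, as in the proof of Theorem~\ref{thm:bigthm}, a gauge function $g$ with $g \prec \id$ such that also $g_s \prec g$ for every $s < 1$, where $g_s(t) = t^s$; equivalently, $\lim_{t \to 0^+} g(t)/t^s = 0$ for all $s < 1$ while $\lim_{t \to 0^+} g(t)/t = \infty$. Applying Theorem~\ref{thm:main_thm} to $Q$ and this $g$ produces a closed $Q$-antichain $B \subset \cantor$ with $H^g(B) > 0$. It then remains to compute $\dim(B)$. Fix $s < 1$. Since $g_s \prec g$, we have $g(t) \le t^s$ for all sufficiently small $t$, so for all sufficiently small $\delta$ every cover of a set $A$ by sets $U_i$ of diameter $< \delta$ satisfies $\sum_i g(\diam(U_i)) \le \sum_i (\diam(U_i))^s$; taking infima and then letting $\delta \to 0^+$ gives $H^g(A) \le H^s(A)$. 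In particular $H^s(B) \ge H^g(B) > 0$, and since this holds for every $s < 1$ we get $\dim(B) = \inf\{s : H^s(B) = 0\} \ge 1$. On the other hand $B \subset \cantor$ forces $\dim(B) \le \dim(\cantor) = 1$, hence $\dim(B) = 1$.

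I do not expect any genuine obstacle here: the substantive work is entirely contained in Theorem~\ref{thm:main_thm} (and behind it Lemmas~\ref{lem:sparse_measure} and~\ref{lem:comeager_good}), and the passage to dimension $1$ reuses, essentially verbatim, the gauge comparison from the proof of Theorem~\ref{thm:bigthm}. If anything, the only point worth double-checking is that a single gauge $g$ can simultaneously be of lower order than the identity and of higher order than every power gauge $t^s$ with $s<1$, which is exactly the elementary computation already performed there.
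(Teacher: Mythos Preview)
Your proposal is correct and is exactly the argument the paper intends: it simply says the corollary follows ``by an analogous argument to the proof of Theorem~\ref{thm:bigthm}'', i.e., apply Theorem~\ref{thm:main_thm} (already stated for quasi-orders) with a gauge $g \prec \id$ dominating every $g_s$, $s<1$, and then read off $\dim(B)=1$. Your write-up just makes that analogy explicit, including the standard comparison $H^g \le H^s$ for small scales; nothing further is needed.
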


\section{Results on $\R^n$}

We can transfer all our results above from the space $\cantor$ to the space
$\R^n$. This is because there are Borel bijections between $\cantor$ and
$[0,1]$ which preserve the property of having positive gauge measure. To show
this we begin with a proposition about functions between gauge measures on different
metric spaces.

\begin{proposition}\label{prop:gauge_cover}
  Suppose $(X_1,d_1)$ and $(X_2,d_2)$
  are metric spaces, $g$ is a gauge function, and $h \from [0,\infty) \to
  [0,\infty)$ is continuous and strictly increasing with $h(0) = 0$.
  Suppose 
  $f \from X_1 \to X_2$ has the property that 
  for all sets $A \subset X_1$, $f(A)$ can be covered by at most $k$ sets
  of $d_2$-diameter $h(\diam_{d_1}(A))$. 
  Then for any $B$, we have
  $H^{g\circ h^{-1}}(f(B)) \leq k H^g(B)$.
\end{proposition}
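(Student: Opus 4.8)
The plan is to argue directly from the definitions of $H^g_\delta$ and $H^{g \circ h^{-1}}_{\delta'}$, transporting a good cover of $B$ to a good cover of $f(B)$ via the hypothesis on $f$. First I would fix $\delta > 0$ and take an arbitrary countable open cover $(U_i)$ of $B$ by sets of $d_1$-diameter less than $\delta$, with $\sum_i g(\diam_{d_1}(U_i))$ close to $H^g_\delta(B)$. Applying the hypothesis to each $U_i$, I obtain for each $i$ a family $(V_{i,1}, \dots, V_{i,k})$ of at most $k$ sets covering $f(U_i)$, each of $d_2$-diameter $h(\diam_{d_1}(U_i))$. Since $(U_i)$ covers $B$, the doubly-indexed family $(V_{i,j})$ covers $f(B)$. (Here I should be a little careful about whether the $V_{i,j}$ need to be open; since $H^g$ may equivalently be computed using arbitrary covers rather than open covers — any set of diameter $r$ in a metric space sits inside an open set of diameter at most $r + \varepsilon$, or one can observe the standard fact that Hausdorff-type measures are unchanged if one drops the openness requirement — this is a routine point I would dispatch in a sentence.)

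Next I would estimate the cost of this cover in the $g \circ h^{-1}$ gauge. Because $h$ is continuous, strictly increasing, with $h(0) = 0$, it has a continuous strictly increasing inverse $h^{-1}$ on its range with $h^{-1}(0) = 0$, so $g \circ h^{-1}$ is again a gauge function (increasing, right-continuous, vanishing only at $0$). For each $(i,j)$ we have $\diam_{d_2}(V_{i,j}) \le h(\diam_{d_1}(U_i))$, hence, using that $g \circ h^{-1}$ is increasing,
\[
(g \circ h^{-1})(\diam_{d_2}(V_{i,j})) \le (g \circ h^{-1})(h(\diam_{d_1}(U_i))) = g(\diam_{d_1}(U_i)).
\]
Summing over $j \le k$ and then over $i$ gives
\[
\sum_{i,j} (g \circ h^{-1})(\diam_{d_2}(V_{i,j})) \le k \sum_i g(\diam_{d_1}(U_i)),
\]
which is within $k\varepsilon$ of $k\, H^g_\delta(B)$.

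Finally I would handle the $\delta \to 0$ passage. Since $h$ is continuous at $0$ with $h(0)=0$, for any $\delta'>0$ there is $\delta>0$ with $h(t) < \delta'$ whenever $t < \delta$; so the transported cover $(V_{i,j})$ consists of sets of $d_2$-diameter less than $\delta'$, whence $H^{g\circ h^{-1}}_{\delta'}(f(B)) \le k\, H^g_\delta(B) \le k\, H^g(B)$. Letting $\delta' \to 0$ (and taking the supremum, which the infimum-over-covers only makes larger on the right but the bound $k H^g(B)$ is uniform) yields $H^{g \circ h^{-1}}(f(B)) \le k\, H^g(B)$, as desired. I do not expect any serious obstacle here; the only mild subtlety is the bookkeeping around openness of covers and the order of the $\delta, \delta'$ and $\varepsilon$ limits, all of which are standard features of the Carathéodory construction of gauge measures.
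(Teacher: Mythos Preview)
Your proposal is correct and follows essentially the same route as the paper's proof: transport an arbitrary $\delta$-cover of $B$ to a cover of $f(B)$ via the hypothesis, compare the gauge sums using monotonicity of $g\circ h^{-1}$, and pass to the limit $\delta\to 0$ using continuity of $h$ at $0$. Your write-up is somewhat more detailed (handling openness of covers and the $\delta,\delta',\varepsilon$ bookkeeping explicitly), but the underlying argument is identical to the paper's.
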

\begin{proof}
  Given any cover $(U_i)$
  of $B \subset X_1$ by sets of diameters less than $\delta$, the sets $f(U_i)$ cover $f(B)$, and we can cover each
  set $f(U_i)$ by $k$ sets $V_{i,1}, \ldots V_{i,k}$ of diameter at most
  $h(\diam(U_i))$. Hence, $H^{g \circ h^{-1}}_{h(\delta)}(f(B)) \leq k
  H^g_\delta(B)$. The proposition follows by taking the limit as $\delta \to 0$ since $h$ is continuous on the
  right.
\end{proof}

We will mostly apply this proposition below with $h$ equal to the identity.
In this case, the statement of the proposition becomes the following:
suppose for all sets $A \subset X_1$, $f(A)$ can be covered by at most $k$
sets of $d_2$-diameter $\diam_{d_1}(A)$. Then for any $B$, we have
$H^{g}(f(B)) \leq k H^g(B)$.

Proposition~\ref{prop:gauge_cover} is related to a classical result in fractal geometry that relates the Hausdorff measures, and thus Hausdorff dimensions, of sets and their images along H\"older continuous functions: If $A\subseteq R^n$ is any set and $f: R^n\to R^m$ is H\"older continuous with exponent $\alpha\in \R^+$ and multiplicative constant $c$, then for any $s\in \R^+$, $H^{s/\alpha}(f(A))\leq c^{s/\alpha}H^s(A)$~\cite[Proposition 3.1]{F14}. However, the hypothesis of Proposition~\ref{prop:gauge_cover} can be satisfied by functions that are not H\"older continuous. A prime example of such functions are the bijections between $\cantor$ and $[0,1]^n$ we will construct now.

\begin{proposition}\label{prop:cantortounit}
  There is a Borel bijection $f \from \cantor \to [0,1]^n$ so that for all sets
  $A$, and all gauge functions $g$, $H^g(A) > 0$ if and only if $H^{g \circ
  h^{-1}}(f(A))
  > 0$ where $h = t^{1/n}$. 
\end{proposition}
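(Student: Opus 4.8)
The plan is to build the bijection $f \from \cantor \to [0,1]^n$ as a mild modification of the standard ``interleaving'' bijection, and then apply Proposition~\ref{prop:gauge_cover} in both directions with $h = t^{1/n}$ (equivalently $h^{-1} = t^n$). First I would recall the classical continuous surjection $\phi \from \cantor \to [0,1]$ sending $x$ to $\sum_n x(n) 2^{-n-1}$: its key metric feature is that a basic clopen set $[s] \subset \cantor$ of diameter $2^{-|s|}$ maps onto a dyadic subinterval of $[0,1]$ of length exactly $2^{-|s|}$, so the image of any set $A$ of diameter $r$ is covered by at most two intervals of diameter $r$ (two because $A$ may straddle a dyadic boundary, or may sit inside a clopen set whose diameter is between $r$ and $2r$). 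To get into $[0,1]^n$, split the $\omega$ coordinates into $n$ arithmetic progressions $\{j, j+n, j+2n, \dots\}$ for $j < n$, apply $\phi$ coordinatewise, and call the resulting map $\Phi \from \cantor \to [0,1]^n$. A clopen set $[s]$ with $|s| = mn$ maps onto a dyadic box of side $2^{-m}$, hence of diameter $\sqrt{n}\, 2^{-m}$, while $\diam([s]) = 2^{-mn}$; so, lengths being raised to a roughly $n$-th power, $\Phi$ satisfies the hypothesis of Proposition~\ref{prop:gauge_cover} with $h(t) = \sqrt{n}\, t^{1/n}$ (up to the constant $\sqrt n$ and up to a bounded number $k$ of boxes needed to handle straddling), giving $H^{g \circ h^{-1}}(\Phi(B)) \leq k\, H^g(B)$ for all $B$.

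Next I would handle the reverse inequality, which requires a map going back, i.e.\ an inverse-like comparison $H^g(A) \leq k'\, H^{g \circ h^{-1}}(\Phi(A))$. The clean way is to produce a Borel map $\psi \from [0,1]^n \to \cantor$ (not literally $\Phi^{-1}$, since $\Phi$ is not injective, but a right inverse on a full-measure set) for which the image of a set of diameter $r$ in $[0,1]^n$ is covered by boundedly many clopen sets of $\cantor$ of diameter $r^n$ (so that Proposition~\ref{prop:gauge_cover} applies with the roles of the spaces swapped and with $h$ replaced by $h^{-1}(t) = t^n$, yielding $H^{(g \circ h^{-1}) \circ h}(\psi(C)) = H^{g}(\psi(C)) \leq k\, H^{g \circ h^{-1}}(C)$). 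Concretely: a box of side $2^{-m}$ in $[0,1]^n$, i.e.\ of diameter $\sqrt n\, 2^{-m}$, pulls back under the interleaving to a clopen set $[s]$ with $|s| = mn$ and diameter $2^{-mn}$, which is comparable to $(\sqrt n\, 2^{-m})^n$ up to the constant $n^{n/2}$; an arbitrary set of diameter $r$ in $[0,1]^n$ meets at most $(\lceil \sqrt n\rceil + 1)^n$ such dyadic boxes at the appropriate scale, so its $\psi$-image is covered by that many clopen sets of the right diameter. The constants $\sqrt n$ and $n^{n/2}$ are absorbed by composing with the scaling $t \mapsto c\,t$, which changes $H^g$ only by a harmless constant factor (or one simply notes Proposition~\ref{prop:gauge_cover} tolerates a multiplicative constant inside $h$ after adjusting the gauge, since $g \prec \id$-type hypotheses are not even needed here—only the two-sided comparison).

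Finally I would define $f$ itself: take $\Phi$ and restrict/correct it on the countable set of binary sequences that are eventually constant (in some coordinate) so that it becomes a genuine bijection onto $[0,1]^n$, removing the corresponding countable ``dyadic'' set from the range and shuffling a countable set around by a Borel bijection; since countable sets have $H^g$-measure $0$ for every gauge function $g$ with $g(0)=0$ (indeed are even $H^g$-null as singletons accumulate), this correction affects none of the ``$H^g > 0$'' statements. Assembling the two inequalities from the previous paragraphs then gives $H^{g \circ h^{-1}}(f(A)) > 0 \iff H^g(A) > 0$ for every $A$ and every gauge function $g$, as required. The main obstacle is purely bookkeeping: getting the diameter-counting constant $k$ in Proposition~\ref{prop:gauge_cover} right in the reverse direction (a set of small diameter in $[0,1]^n$ can spread across several dyadic boxes, and one must also check that the right inverse $\psi$ can be taken Borel and that its image really is covered by the claimed clopen sets rather than by sets of slightly larger diameter). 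Once the scale $2^{-m}$ versus $2^{-mn}$ dictionary and the constants $\sqrt n$, $n^{n/2}$ are tracked carefully, both applications of Proposition~\ref{prop:gauge_cover} go through and the equivalence follows.
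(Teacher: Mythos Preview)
Your approach is essentially the paper's: binary-expansion map, $n$-fold interleaving of coordinates, apply Proposition~\ref{prop:gauge_cover} in both directions, and repair a countable set to obtain a genuine bijection. The paper organizes the bookkeeping slightly differently by factoring through the intermediate space $((\cantor)^n, d_\infty)$ and treating the $n=1$ case first, but the content is the same.

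One point does need correction. Your claim that the constants $\sqrt{n}$ and $n^{n/2}$ can be ``absorbed by composing with the scaling $t \mapsto ct$, which changes $H^g$ only by a harmless constant factor'' is false for general gauge functions: for $g(t) = e^{-1/t}$ one has $g(2t)/g(t) = e^{1/(2t)} \to \infty$ as $t \to 0^+$, so $H^g$ and $H^{g(2\,\cdot)}$ need not have the same null sets, and proving the equivalence for $h(t) = \sqrt{n}\,t^{1/n}$ does not automatically give it for $h(t) = t^{1/n}$. The fix is the one the paper uses: keep $h(t) = t^{1/n}$ exactly and absorb the dimensional constants into the integer $k$ of Proposition~\ref{prop:gauge_cover}. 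A dyadic cube of side $2^{-m}$ in $[0,1]^n$ can be covered by a number of sets of Euclidean diameter $2^{-m}$ that depends only on $n$; conversely, any set of Euclidean diameter $r$ meets only boundedly many dyadic cubes at a scale whose pullback to $\cantor$ has diameter at most $r^n$. With that adjustment your argument goes through verbatim.
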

\begin{proof}
  We begin by proving the case $n = 1$. Let $f \from \cantor \to [0,1]$ map
  each infinite binary sequence $x$ to the real number given by the binary
  expansion of $x$. Since the dyadic rationals have both a finite and an
  infinite binary expansion, this map is not injective. However, $f$ is a
  bijection between $\{x \in \cantor \colon x \text{ is not eventually
  constant}\}$ and the complement of the dyadic rationals. Both these sets
  are co-countable in $\cantor$ and $[0,1]$ respectively. Hence, their
  complements have $H^g$-measure $0$ for every gauge function $g$, and we
  can redefine $f$ on this countable set so that it is a bijection from
  $\cantor \to [0,1]$, and hence ignore these countable sets in what
  follows. 

  Now any set $A \subset \cantor$ can be covered by a basic open set of the
  same diameter. So suppose $s \in 2^{< \omega}$ is a finite binary
  sequence of length $n = |s|$. Then the basic open set $[s] = \{x \in
  \cantor \colon x \supset s\}$ of diameter $2^{-n}$ is
  mapped by $f$ to an interval of the form $(p/2^n,(p+1)/2^n)$, which also
  has diameter $2^{-n}$ in the Euclidean metric. So $f$ has the property
  that for all $B$, $H^g(f(B)) \leq H^g(B)$ by Proposition
  \ref{prop:gauge_cover} letting $h$ be the identity and $k$ being $1$.

  Now we argue similarly for $f^{-1}$. Any set in $[0,1]$ can be
  covered by a closed interval of the same
  diameter. Suppose $[a,b]$ is a closed interval. 
  Let $m$ be the integer so that
  $1/2^{m} < \diam([a,b]) \leq 1/2^{m-1}$. There is a unique
  dyadic rational of the form $p/2^m$ in $(a,b)$ where $p$ is an integer.
  Hence, $[a,b] \subset [p/2^{m} - 1/2^{m-1},p/2^m + 1/2^{m-1}]$. So $[a,b]$ is covered by
  four dyadic intervals of length $1/2^m$: $[p/2^m - 1/2^{m-1},p/2^m -
  1/2^{m}], \ldots, [p/2^{m-1} + 1/2^m, p/2^m + 1/2^{m-1}]$. All of these
  intervals are the images of basic open sets in $\cantor$ of diameter 
  $1/2^m$ which is less than $\diam([a,b])$. So for any set $B$, we
  have $H^g(B) \leq 4 H^g(f(B))$ by Proposition~\ref{prop:gauge_cover}
  applied to $f^{-1}$. So combining with the above paragraph, we have that
  for all $B$, $\frac{1}{4} H^g(B) \leq H^g(f(B)) \leq H^g(B)$.

  Now we prove the case $n > 1$. Let $d_\infty$ be the metric on $(\cantor)^{n}$ defined by $d_\infty((x_1,
  \ldots, x_n),(y_1, \ldots, y_n)) = \sup_i d(x_i,y_i)$ where $d$ is the
  usual metric on $\cantor$. 
  Consider the function $j_n \from (\cantor,d) \to ((\cantor)^{n},d_\infty)$
  defined by $j(x) = (y_1, \ldots, y_n)$ where $y_i(j) = x(jn + i)$ so,
  $y_i$ is all the bits of $x$ that are $i$ mod $n$ in order. Then if
  $d(x,y) = 2^{-k}$, then $d(j(x),j(y)) = 2^{-\lfloor k/n \rfloor}$. 
  So using $h(t) = t^{1/n}$ and Proposition~\ref{prop:gauge_cover} on $j_n$ and
  $j_n^{-1}$, we conclude there are constants $k_1$ and $k_2$ so that
  $k_1 H^g(B) < H^{g \circ h}(f(B)) < k_2 H^g(B)$. 

  Finally, let $f_n \from
  \cantor \to [0,1]^n$ be defined by $f_n(x) = (f(y_1), \ldots, f(y_n))$
  where $j_n(x) = (y_1, \ldots, y_n)$, and $f$ is the function from the
  case $n = 1$ defined above. 
  Then apply Proposition~\ref{prop:gauge_cover} and note that
  if $d_\infty$ is
  the sup metric on $[0,1]^n$, and $d$ is the usual Euclidean metric on
  $[0,1]^n$, then any set $A \subset [0,1]^n$ of $d$-diameter $r$
  has $d_\infty$-diameter at most $r$. Conversely, there is a constant
  $c_n$ so that any set $A \subset [0,1]^n$ of $d_\infty$ diameter
  $r$ can be covered by $c_n$ sets of $d$-diameter $r$.
\end{proof}

Now we can obtain a version of Proposition~\ref{thm:bigthm} for $\R^n$.
\begin{corollary}\label{cor:unitinterval}
Suppose $g \from [0,\infty) \to [0,\infty)$ is a gauge function of lower order than $t \mapsto t^n$
and that $E$ is a countable Borel equivalence relation on $\R^n$. 
Then there is a closed set $A \subset [0,1]$ such that $E \restriction A$ is
smooth, and $H^g(A) > 0$. In particular, there is a closed set of Hausdorff
dimension $n$ such that $E \restriction A$ is smooth.
\end{corollary}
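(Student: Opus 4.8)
The plan is to transfer Theorem~\ref{thm:main_thm} from $\cantor$ to $[0,1]^n$ along the Borel bijection $f\from\cantor\to[0,1]^n$ of Proposition~\ref{prop:cantortounit}, taken with $h = t^{1/n}$. First I would pull $E$ back to $\cantor$: let $E'$ be the relation defined by $x \mathrel{E'} x' \iff f(x) \mathrel{E} f(x')$; since $f$ is Borel and $E$ has countable classes, $E'$ is a countable Borel equivalence relation on $\cantor$. Next I would fix the gauge function to feed into the $\cantor$-version of the argument, namely $\tilde g(t) := g(t^{1/n})$. Because $g$ has lower order than $t \mapsto t^n$, we have $g(u)/u^n \to \infty$ as $u \to 0^+$; substituting $u = t^{1/n}$ gives $\tilde g(t)/t \to \infty$, i.e.\ $\tilde g \prec \id$, and $\tilde g$ is clearly a gauge function, being $g$ composed with a continuous increasing function vanishing at $0$. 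Note also that $\tilde g \circ h^{-1}(t) = \tilde g(t^n) = g(t)$.

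Then I would run the proof of Theorem~\ref{thm:main_thm} for the locally countable Borel quasi-order $E'$ and the gauge $\tilde g$: fixing a sufficiently sparse $S \subset \omega$ as in Lemma~\ref{lem:sparse_measure} and a sufficiently generic $y \from 2^{<\omega} \to 2$, that argument yields a closed set $B \subset [T_{S,y}]$ which is an $E'$-antichain and which, crucially, has $\mu_{S,y}(B) > 0$ (I want the uniform-measure positivity here, not merely $H^{\tilde g}(B) > 0$). Since $B$ is an $E'$-antichain, it meets each $E'$-class in at most one point, so $E' \restriction B$ is smooth. Now let $\nu := f_{*}\mu_{S,y}$ be the pushforward, a Borel probability measure on $[0,1]^n$. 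The set $A_0 := f(B)$ is Borel (an injective Borel image of a Borel set, by the Lusin--Suslin theorem), and $\nu(A_0) = \mu_{S,y}(f^{-1}(A_0)) = \mu_{S,y}(B) > 0$; so by inner regularity of finite Borel measures on Polish spaces there is a compact set $A \subset A_0$ with $\nu(A) > 0$. This $A$ will be the desired closed set.

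It remains to verify the two conclusions for $A$. For smoothness: $f^{-1} \restriction A$ is a Borel injection of $A$ into $B$ which, by the definition of $E'$, reduces $E \restriction A$ to $E' \restriction B$; composing with a reduction of $E' \restriction B$ to $=_\R$ shows $E \restriction A$ is smooth. For the measure: since $f$ is a bijection and $A \subset f(B)$, we have $f^{-1}(A) \subset B \subset [T_{S,y}]$ and $\mu_{S,y}(f^{-1}(A)) = \nu(A) > 0$, so Lemma~\ref{lem:sparse_measure} gives $H^{\tilde g}(f^{-1}(A)) > 0$, and then Proposition~\ref{prop:cantortounit} gives $H^{\tilde g \circ h^{-1}}(A) = H^g(A) > 0$. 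For the final ``in particular'' clause I would instead fix a gauge $g$ of lower order than $t^n$ for which $t \mapsto t^s$ has lower order than $g$ for every $s < n$ --- for instance $g(t) = t^n \log_2(1/t)$ for small $t$, suitably extended to a gauge on $[0,\infty)$; then $H^g(A) > 0$ forces $H^s(A) > 0$ for all $s < n$, so $\dim(A) \geq n$, while $A \subset [0,1]^n$ gives $\dim(A) \leq n$, hence $\dim(A) = n$.

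The step I expect to be the genuine obstacle is producing a \emph{closed} (rather than merely Borel or analytic) set $A$. The naive transfer $A = f(B)$ does not work, because $f$ is only a Borel bijection, not a homeomorphism, so $f(B)$ need not be closed, and $H^g$ need not be inner regular on these sets (it can fail to be $\sigma$-finite there), so one cannot directly shrink $f(B)$ to a closed subset of positive $H^g$-measure. The remedy above --- push the \emph{finite} measure $\mu_{S,y}$ forward to $\nu$, use inner regularity of $\nu$ to extract a compact $A \subset f(B)$ of positive $\nu$-mass, and only then recover $H^g(A) > 0$ by pulling $A$ back inside $[T_{S,y}]$ and reapplying Lemma~\ref{lem:sparse_measure} and Proposition~\ref{prop:cantortounit} --- is the one point requiring real care; the pullback of $E$, the choice of $\tilde g$, and the dimension computation are routine.
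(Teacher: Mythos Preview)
Your argument is correct and follows the same overall strategy as the paper: define $E'$ on $\cantor$ by pulling $E$ back along the bijection $f$ of Proposition~\ref{prop:cantortounit}, apply the $\cantor$-theorem with the gauge $\tilde g(t)=g(t^{1/n})$, and push the resulting set forward to $[0,1]^n$. The one genuine difference is precisely the point you flag. The paper simply writes ``let $A\subset f(A')$ be closed with $H^g(A)>0$,'' tacitly invoking the classical fact (in Rogers~\cite{R}) that an analytic set of positive Hausdorff gauge measure contains a compact subset of positive finite gauge measure. Your route instead re-enters the proof of Theorem~\ref{thm:main_thm} to retain the \emph{finite} measure $\mu_{S,y}$, pushes it forward to $\nu=f_*\mu_{S,y}$, extracts a compact $A\subset f(B)$ by inner regularity of a finite Borel measure on a Polish space, and then recovers $H^g(A)>0$ by pulling back through Lemma~\ref{lem:sparse_measure} and Proposition~\ref{prop:cantortounit}. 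This is a clean, self-contained substitute for the Davies-type inner-regularity appeal; the paper's version is shorter but leans on an unstated external result, while yours keeps everything internal to the machinery already built. The ``in particular'' clauses are handled the same way in both (choose a gauge strictly between $t^n$ and every $t^s$, $s<n$).
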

\begin{proof}
  Let $g'(t) = g(t^{1/n})$. Note that $g'$ has lower order than the
  identity if and only if $g$ has lower order than $t^n$. Let $f \from \cantor \to [0,1]^n$ be the Borel bijection from
  Proposition~\ref{prop:cantortounit}.
  Given $E$ on $\R^n$, define $E'$ on $\cantor$ by $x \mathrel{E'} y$ if 
  $f(x) \mathrel{E} f(y)$. We can apply Theorem~\ref{thm:bigthm} to $E'$ to
  obtain a closed set $A'$ with $H^{g'}(A') > 0$ and such that $E' \restriction A'$ is smooth. Now $f(A') \subset [0,1]$ has positive $H^g$ measure by
  Proposition~\ref{prop:cantortounit} and is Borel since an injective image
  of a Borel set under a Borel function is Borel. 
  So $E \restriction f(A')$ is smooth since $f$ is a bijection and $E'
  \restriction A'$ is smooth. To finish, let $A \subset f(A')$
  be closed with $H^{g}(A) > 0$.

  To see the last part of the corollary, recall that Theorem~\ref{thm:bigthm} 
  allows us to take $A'$ with $\dim(A')=1$ such that $E'\restriction A'$ is
  smooth. Thus, by the above arguments we can get an $A \subset f(A')$ with
  $\dim(A)=n$ so that $E\restriction A$ is smooth.
\end{proof}
%
%Combining Proposition~\ref{prop:cantortounit} with an argument similar to the one in the proof of Theorem~\ref{thm:bigthm} we obtain that every countable Borel equivalence relation on $\R$ and $[0,1]$ is smooth on set of Hausdorff dimension $1$.
%The following corollaries extend this and Corollary~\ref{cor:quasi-orderantichain} to $\R^n$, respectively $[0,1]^n$. They follow from the simple consequence of Frostman's lemma that for Borel $X\subseteq \R^m$, $Y\subseteq \R^n$ $\dim(X)+\dim(Y)\leq \dim(X\times Y)$ (see for example \cite[Product formula 7.2]{F14}). 
We finish by noting that the same arguments used to prove
Corollaries~\ref{cor:unitinterval} can be used to obtain
an analogue of this result for locally countable Borel quasi-orders.
\begin{corollary}
  Suppose that $Q$ is a locally countable Borel quasi-order on $\R^n$. Then there is a closed $Q$-antichain of Hausdorff dimension $n$.
\end{corollary}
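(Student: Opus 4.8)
The plan is to push the Cantor-space result to $\R^n$ verbatim along the lines of the proof of Corollary~\ref{cor:unitinterval}, replacing the appeal to Theorem~\ref{thm:bigthm} by an appeal to Theorem~\ref{thm:main_thm} (the quasi-order version on $\cantor$), and transferring via the gauge-preserving Borel bijection $f \from \cantor \to [0,1]^n$ of Proposition~\ref{prop:cantortounit}.

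Concretely, I would first choose a gauge function $g$ of lower order than $t \mapsto t^n$ with $(t \mapsto t^s) \prec g$ for every $s < n$, and set $g'(t) = g(t^{1/n})$; as observed in the proof of Corollary~\ref{cor:unitinterval}, $g' \prec \id$. Pull $Q$ back along $f$, defining $x \leq_{Q'} y$ iff $f(x) \leq_Q f(y)$: since $f$ is a Borel bijection this is again a locally countable Borel quasi-order on $\cantor$. By Theorem~\ref{thm:main_thm} there is a closed $Q'$-antichain $B' \subset \cantor$ with $H^{g'}(B') > 0$. By Proposition~\ref{prop:cantortounit} (with $h = t^{1/n}$, so that $g' \circ h^{-1} = g$) this gives $H^g(f(B')) > 0$; moreover $f(B')$ is Borel, being an injective Borel image of a Borel set, and is a $Q$-antichain because $f$ is a bijection and $B'$ is a $Q'$-antichain. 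Finally, using inner regularity of $H^g$ I would extract a closed set $A \subseteq f(B')$ with $H^g(A) > 0$. Then $A$ is closed in $\R^n$ (as $[0,1]^n$ is), is a $Q$-antichain, and satisfies $H^s(A) > 0$ for all $s < n$ since $(t \mapsto t^s) \prec g$; combined with $A \subseteq [0,1]^n$ this yields $\dim(A) = n$.

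I expect no real obstacle here: every ingredient is already in place, and the only points that need checking are precisely the ones already handled in the proof of Corollary~\ref{cor:unitinterval} --- that pulling a locally countable Borel quasi-order back along a Borel bijection keeps it locally countable and Borel, that an injective Borel image of a Borel set is Borel, and that $H^g$ admits a closed subset of positive measure inside any Borel set of positive $H^g$-measure.
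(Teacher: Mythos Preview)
Your proposal is correct and is exactly the argument the paper has in mind: it explicitly says the corollary follows by ``the same arguments used to prove Corollary~\ref{cor:unitinterval}'', and you have carried this out precisely, replacing the appeal to Theorem~\ref{thm:bigthm} by Theorem~\ref{thm:main_thm} and otherwise repeating the transfer along the bijection of Proposition~\ref{prop:cantortounit}. There is nothing to add.
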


\end{document}